\numberwithin{equation}{section}
\theoremstyle{plain}
\newtheorem{theorem}{Theorem}[section]
\newtheorem{corollary}[theorem]{Corollary}
\newtheorem{lemma}[theorem]{Lemma}
\newtheorem{proposition}[theorem]{Proposition}
\theoremstyle{definition}
\theoremstyle{remark}
\newcommand{\N}{\mathbb{N}}
\newcommand{\R}{\mathbb{R}}
\newcommand{\ind}[1]{\mathbf{1}_{\left\{#1\right\}}}
\DeclareMathOperator{\E}{\mathbf{E}}
\renewcommand{\P}{\mathbf{P}}
\renewcommand{\bar}[1]{\overline{#1}}
\renewcommand{\epsilon}{\varepsilon}
\renewcommand{\phi}{\varphi}
\newcommand{\dd}{\mathrm{d}}
\title{On the length of the shortest path in a sparse Barak-Erd\H{o}s graph}
\author{Bastien Mallein\footnote{LAGA UMR 7539, Université Sorbonne Paris Nord, and DMA, UMR 8553, École Normale Supérieure, Email: \texttt{mallein@math.univ-paris13.fr}} \and Pavel Tesemnikov\footnote{Novosibirsk State University, Sobolev Institute of Mathematics and MCA, Email: \texttt{tesemnikov.p@gmail.com}}}
\date{\today}
\begin{document}

\maketitle

\begin{abstract}
We consider an inhomogeneous version of the Barak-Erd\H{o}s graph, i.e. a directed Erd\H{o}s-Rényi random graph on $\{1,\ldots,n\}$ with no loop. Given $f$ a Riemann-integrable non-negative function on $[0,1]^2$ and $\gamma > 0$, we define $G(n,f,\gamma)$ as the random graph with vertex set $\{1,\ldots,n\}$ such that for each $i < j$ the directed edge $(i,j)$ is present with probability $ p_{i, j}^{(n)} = \frac{f(i/n,j/n)}{n^\gamma}$, independently of any other edge. We denote by $L_n$ the length of the shortest path between vertices $1$ and $n$, and take interest in the asymptotic behaviour of $L_n$ as $n \to \infty$.
\end{abstract}

\section{Introduction}

The Barak-Erd\H{o}s graph is a random directed graph with no loop constructed in the following fashion. Given $n \in \N$ and $p \in (0,1)$, the Barak-Erd\H{o}s graph $G(n,p)$ is a graph with vertex set $\{1,\ldots,n\}$ such that for each $i < j$, the edge $(i,j)$ from vertex $i$ to vertex $j$ is present with probability $p$, independently of any other directed edge. This graph is a directed acyclic version of the well-known Erd\H{o}s-R\'{e}nyi graph. It can be used to model community food webs in ecology \cite{NeC86}, or the task graph for parallel processing in computer sciences \cite{GNPT86}.

In particular, the length (number of edges) of the longest (directed) path, denoted $M_n$, has been the subject of multiple studies, as $M_n+1$ is the number of steps needed to complete the task graph assuming maximal parallelization. Newman \cite{New92} proved that $\frac{M_n}{n}$ converges in law to a deterministic function $p \mapsto C(p)$. Increasingly precise bounds were obtained on this function and its generalizations by \cite{FoK03,ChR17,FoK18,MaR19,MaR21,FKMR}.

In the present article, we take interest in the length $L_n$ of the \emph{shortest} path between vertices $1$ and $n$ in this graph, which has been much less studied. It is worth noting that for fixed value of $p$, one has
\[
  \P(L_n = 1) = p \quad \text{and} \quad \lim_{n \to \infty} \P(L_n = 2) = 1-p,
\]
as with probability $1 - (1 - p^2)^{n-2}$, there is a vertex $j \in \{2,\ldots n-1\}$ connect to both $1$ and $n$, hence $L_n$ is equal to $1$ or $2$ with high probability. In particular, the length of the shortest path in dense graphs remains tight.

This fact is mentioned in \cite{Tes18}, which takes interest in the asymptotic behaviour of $L_n^{(\gamma)}$, the length of the shortest path between vertices $1$ and $n$ in a graph with connexion constant $p_n = n^{-\gamma}$, i.e. in the limit of sparse graphs. There, it is shown that for all $k \geq 2$,
\[
\lim_{n \to \infty} \P (L^{(\gamma)}_{n} \le k) = 0 \quad \text{ if } 1 - \frac{1}{k} < \gamma.
\]
We extend this result in the present article by obtaining the convergence in distribution of $L_n^{(\gamma)}$ for all $\gamma \in (0,1)$.

We consider here the asymptotic behaviour of the length of the shortest path between $1$ and $n$ in a time-inhomogeneous version of the Barak-Erd\H{o}s graph, defined as follows. Let $f$ be a Riemann-integrable positive function on $[0,1]^2$ and $\gamma \in (0,1)$. For each $n \in \N$ and $i < j$, we set $p_{i,j}^{(n)} = \frac{f(i/n,j/n)}{n^\gamma}$. The time-inhomogeneous sparse Barak-Erd\H{o}s graph $G(n,f,\gamma)$ is defined as a graph with vertex set $\{1,\ldots,n\}$ such that for each $i < j$, the directed edge $(i,j)$ is present with probability $p_{i,j}^{(n)}$.

The main result of the article is the following estimate on the asymptotic behaviour of $L_n$ for $\gamma = 1-\frac{1}{k}$.
\begin{theorem}
\label{thm:main}
Let $f$ be a Riemann-integrable non-negative function on $[0,1]^2$ and $k \in \N$. We fix $\gamma = 1 - \frac{1}{k}$ and we set
\[
  c_k(f) = \int_{0 < u_1 < \ldots<u_{k-1}<1} \prod_{j=0}^{k-1} f(u_j,u_{j+1}) \dd u_1 \cdots \dd u_{k-1} \in [0,\infty],
\]
with $u_0=0$ and $u_1=1$. Writing $L_{n}$ for the length of the shortest path in $G(n,f,\gamma)$, we have
\[
  \lim_{n \to \infty} \P(L_n = k+1) = 1 - \P(L_n=k) = \exp\left( - c_k(f) \right).
\]
\end{theorem}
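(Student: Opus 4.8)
\section*{Proof proposal}

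The plan is to introduce, for each $m \ge 1$, the random variable $N_m$ counting the directed paths of length $m$ from vertex $1$ to vertex $n$ in $G(n,f,\gamma)$. Since every edge increases the vertex label, all such paths are increasing, hence self-avoiding, and one has $L_n = \min\{m : N_m \ge 1\}$. I would reduce the theorem to three asymptotic statements: (a) $\P(N_m \ge 1) \to 0$ for every $m \le k-1$; (b) $N_k$ converges in distribution to a Poisson law of parameter $c_k(f)$; and (c) $\P(N_{k+1} = 0) \to 0$. Granting these, a union bound gives $\P(L_n \le k-1) \to 0$ and $\P(L_n \ge k+2) \le \P(N_{k+1} = 0) \to 0$, while on the event $\{N_1 = \cdots = N_{k-1} = 0\}$, whose probability tends to $1$, one has $\{L_n = k\} = \{N_k \ge 1\}$ and $\{L_n = k+1\} = \{N_k = 0\}\cap\{N_{k+1} \ge 1\}$. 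Combining with (b) and (c) then yields $\P(L_n = k) \to 1 - \e^{-c_k(f)}$ and $\P(L_n = k+1) \to \e^{-c_k(f)}$.

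Statement (a) is a first-moment estimate. Writing $\E[N_m]$ as a sum over the $m-1$ intermediate vertices of a product of $m$ connection probabilities, one gets $\E[N_m] = n^{-m\gamma}\sum \prod f(\cdot,\cdot)$; the sum is a Riemann sum with $\sim n^{m-1}$ terms, so for bounded $f$ one finds $\E[N_m] = O(n^{m-1-m\gamma}) = O(n^{m/k - 1})$, which tends to $0$ whenever $m \le k-1$, and Markov's inequality gives (a). The same computation at $m = k$ shows $\E[N_k] \to c_k(f)$ by Riemann-integrability (the factor $n^{-k\gamma} = n^{-(k-1)}$ exactly balances the $\sim n^{k-1}$ intermediate-vertex choices), and at $m = k+1$ it gives $\E[N_{k+1}] \sim C\, n^{1/k} \to \infty$.

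For (b) I would use the method of factorial moments: $N_k \Rightarrow \mathrm{Poisson}(c_k(f))$ follows once $\E[(N_k)_r] = \E[N_k(N_k-1)\cdots(N_k-r+1)] \to c_k(f)^r$ for every $r \ge 1$. Expanding the factorial moment as a sum over ordered $r$-tuples of distinct paths, I would group the tuples by the isomorphism type of their union $H$ (a directed graph from $1$ to $n$), so that the contribution of type $H$ scales like $n^{\Phi(H)}$ with $\Phi(H) = V(H) - \gamma E(H)$, where $V(H)$ and $E(H)$ denote the numbers of internal vertices and of edges. The crux is the combinatorial estimate $\Phi(H) \le 0$, with equality if and only if $H$ is the disjoint union (away from $\{1,n\}$) of the $r$ paths. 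I would prove this by adding the paths one at a time: a path contributing $v$ new internal vertices in $s$ maximal runs together with $c$ further ``chord'' edges raises $\Phi$ by $[v - (s+c)(k-1)]/k \le 0$, since each run has at most $k-1$ vertices; tracking the equality case (each added path internally disjoint from the previous ones, no added path contained in their union by distinctness) isolates the disjoint type. The disjoint tuples contribute $(\E[N_k])^r(1+o(1)) \to c_k(f)^r$, the remaining finitely many types each contribute $O(n^{\Phi(H)}) = o(1)$, giving the claim. When $c_k(f) = \infty$ one argues instead that $\E[N_k] \to \infty$ with $\E[N_k^2] \sim \E[N_k]^2$ (same overlap estimate) forces $N_k \to \infty$ in probability, so $\P(N_k = 0) \to 0 = \e^{-c_k(f)}$.

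Statement (c) is a second-moment estimate for $N_{k+1}$ of the same flavour: $\E[N_{k+1}^2] = \E[(N_{k+1})_2] + \E[N_{k+1}]$, and the overlap estimate shows that pairs of length-$(k+1)$ paths sharing at least one internal vertex contribute $O(n^{1/k}) = o(n^{2/k})$, whereas disjoint pairs give $\E[N_{k+1}]^2(1+o(1)) \sim n^{2/k}$; hence $\Var(N_{k+1}) = o(\E[N_{k+1}]^2)$ and $\P(N_{k+1} = 0) \to 0$. The main obstacle is the uniform control of overlapping path configurations, i.e. proving $\Phi(H) < 0$ (resp. $\Phi(H) < 2/k$) strictly for every non-disjoint type and summing the resulting error over all types, together with the technical point that $f$ is only assumed Riemann-integrable and may be unbounded with $c_k(f) = \infty$; I would dispose of the latter by truncating $f$ at level $A$, applying the bounded case, and letting $A \to \infty$ using the monotone convergence $c_k(f \wedge A) \uparrow c_k(f)$ and the monotonicity of $L_n$ as a function of the edge probabilities.
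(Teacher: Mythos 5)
Your proposal is correct, and its skeleton (first moment for $m\le k-1$, Poisson limit for $N_k$, existence of a length-$(k+1)$ path) matches the paper's plan, but both of its substantive steps are carried out by genuinely different means. For the Poisson limit, the paper (Lemma~\ref{S-C method}) uses the Chen--Stein method: it suffices to check that $j\P(Z_n(k)=j)-\E(Z_n(k))\P(Z_n(k)=j-1)\to 0$, which reduces to a \emph{first-moment} overlap estimate --- conditionally on a path $\rho$ being open, the expected number of length-$k$ paths sharing an edge with $\rho$ vanishes. Your method of factorial moments instead requires the full combinatorial classification of unions of $r$ paths and the exponent bound $\Phi(H)=V(H)-\gamma E(H)\le 0$ with equality only for internally disjoint tuples; this is heavier, but the same machinery then delivers the variance bound $\Var(N_{k+1})=o(\E[N_{k+1}]^2)$ you need for step (c). As for step (c) itself --- $\P(L_n\ge k+2)\le\P(N_{k+1}=0)\to 0$ --- the paper in fact never writes out an argument: its lemmas only yield $\P(L_n<k)\to 0$ and $\P(L_n=k)\to 1-e^{-c_k(f)}$, and the intended completion is presumably the one visible in the proof of Corollary~\ref{cor:coupling}, namely coupling $G(n,f,1-\tfrac{1}{k})$ with $G(n,Af,1-\tfrac{1}{k+1})$ via Proposition~\ref{prop:coupling}, applying Lemma~\ref{S-C method} at level $k+1$, and letting $A\to\infty$. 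So on this point your write-up is more explicit than the printed one, at the price of a second-moment computation that the coupling route avoids.

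Two caveats, neither fatal. First, in your equality-case analysis, distinctness alone does not prevent an added path from lying inside the union of \emph{several} earlier paths; you also need the remark that in a union of internally disjoint $1$-to-$n$ paths every internal vertex has out-degree one, so that such a union contains no length-$k$ path other than the original ones. Second, your step (c) (and equally the paper's coupling route) requires $c_{k+1}(f)>0$ so that $\E[N_{k+1}]\to\infty$; for merely non-negative $f$ this can fail (for $f\equiv 0$ the theorem as stated is actually false, since $L_n=\infty$), so the positivity of $f$ assumed in the abstract and in Corollary~\ref{cor:coupling} has to be invoked at exactly this point.
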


By coupling arguments, Theorem~\ref{thm:main} can be extended to describe the convergence in distribution of $L_n$ as $n \to \infty$ for any time-inhomogeneous Barak-Erd\H{o}s graph.
\begin{corollary}
\label{cor:coupling}
Let $f$ be a Riemann-integrable positive function on $[0,1]^2$ and $\gamma > 0$. For $k \geq 2$, we have
\[
  \lim_{n \to \infty} \P(L_n \leq k) = \begin{cases}
      0 & \text{ if } k < \frac{1}{1-\gamma},\\
      e^{-c_k(f)} & \text{ if } k = \frac{1}{1-\gamma},\\
      1 & \text{ otherwise.}
    \end{cases}
\]
%Let $L_n$ be the length of the shortest path in $G(n,f,\gamma)$, for each $k \in \N$ such that $\gamma > 1 - \frac{1}{k}$ or $c_k(f) = 0$, we have
%\[
%  \lim_{n \to \infty} \P(L_n \leq k) = \begin{cases}
%    0 & \text{ if } k < \frac{1}{1-\gamma} \text{ or } c_k(f) = 0\\
%    1 & \text{ otherwise.}
%  \end{cases}
%\]
\end{corollary}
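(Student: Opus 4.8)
The plan is to derive all three regimes from the single critical statement of Theorem~\ref{thm:main}, applied not to $f$ but to its rescalings $\lambda f$ for $\lambda>0$, together with a monotone coupling across parameters. Write $\gamma_c=1-\frac1k$, so that $k=\frac{1}{1-\gamma_c}$, and note that the three cases $k\lessgtr\frac{1}{1-\gamma}$ correspond exactly to $\gamma\gtrless\gamma_c$. The middle case $\gamma=\gamma_c$ is then nothing but Theorem~\ref{thm:main}: at criticality $L_n\in\{k,k+1\}$ with high probability, so $\P(L_n\le k)=\P(L_n=k)+o(1)$ converges to the value furnished by the theorem. It remains to handle $\gamma\ne\gamma_c$, and here I would use coupling.

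The coupling is the standard one: attach to each ordered pair $i<j$ an independent uniform variable $U_{i,j}$ on $[0,1]$ and, given connection probabilities $(q_{i,j})$, declare $(i,j)$ present iff $U_{i,j}\le q_{i,j}$. If $q_{i,j}\le q'_{i,j}$ for all $i<j$, the first graph is a subgraph of the second; since adding edges can only shorten directed paths, the associated shortest-path lengths satisfy $L_n\ge L'_n$. Two elementary remarks power the argument: $c_k$ is homogeneous of degree $k$, i.e. $c_k(\lambda f)=\lambda^k c_k(f)$, and $c_k(f)\in(0,\infty)$ since $f$ is positive and, being Riemann-integrable, bounded. Applying Theorem~\ref{thm:main} to $\lambda f$ at $\gamma_c$ therefore gives $\P\big(L_n(\lambda f,\gamma_c)\le k\big)\to 1-\e^{-\lambda^k c_k(f)}$.

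For the sparse regime $\gamma>\gamma_c$ (that is $k<\frac{1}{1-\gamma}$), fix $\lambda>0$. As $\gamma_c-\gamma<0$, for all $n$ large enough $n^{\gamma_c-\gamma}\le\lambda$, whence $\frac{f(i/n,j/n)}{n^\gamma}\le\lambda\frac{f(i/n,j/n)}{n^{\gamma_c}}$ for every $i<j$; the coupling yields $L_n(f,\gamma)\ge L_n(\lambda f,\gamma_c)$ and
\[
  \limsup_{n\to\infty}\P\big(L_n(f,\gamma)\le k\big)\le\lim_{n\to\infty}\P\big(L_n(\lambda f,\gamma_c)\le k\big)=1-\e^{-\lambda^k c_k(f)}.
\]
Letting $\lambda\downarrow0$ forces the left-hand side to $0$. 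The dense regime $\gamma<\gamma_c$ (that is $k>\frac{1}{1-\gamma}$) is symmetric: now $n^{\gamma_c-\gamma}\ge\lambda$ for $n$ large, the domination reverses to $L_n(f,\gamma)\le L_n(\lambda f,\gamma_c)$, so
\[
  \liminf_{n\to\infty}\P\big(L_n(f,\gamma)\le k\big)\ge\lim_{n\to\infty}\P\big(L_n(\lambda f,\gamma_c)\le k\big)=1-\e^{-\lambda^k c_k(f)},
\]
and letting $\lambda\uparrow\infty$ pushes the left-hand side to $1$.

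The one delicate point — and the main obstacle to a one-line proof — is that the entrywise domination between the probabilities of $G(n,f,\gamma)$ and $G(n,\lambda f,\gamma_c)$ holds only for $n$ large \emph{depending on} $\lambda$; there is no coupling at a fixed scale. This imposes the two-stage limit above: first send $n\to\infty$ with $\lambda$ frozen, invoking Theorem~\ref{thm:main} for the fixed function $\lambda f$, and only afterwards optimize over $\lambda$. Positivity of $f$ is used precisely to guarantee $c_k(f)\in(0,\infty)$, which is what makes the rescaled exponents $\lambda^k c_k(f)$ sweep all of $(0,\infty)$ and lets the bounds $1-\e^{-\lambda^k c_k(f)}$ actually attain the limits $0$ and $1$.
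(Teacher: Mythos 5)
Your proof is correct, and in two of the three regimes it coincides with the paper's own argument: the critical case is read off from Theorem~\ref{thm:main}, and your dense case ($k>\frac{1}{1-\gamma}$) is essentially word for word the paper's proof --- couple $G(n,f,\gamma)$ from below by $G(n,\lambda f,1-\frac{1}{k})$ for $n$ large depending on $\lambda$, apply the theorem to $\lambda f$ via $c_k(\lambda f)=\lambda^k c_k(f)>0$, then let $\lambda\to\infty$. The genuine difference is the sparse case ($k<\frac{1}{1-\gamma}$): the paper does not couple there at all, but uses the first-moment bound of Lemma~\ref{lem:mean} --- for $j\le k<\frac{1}{1-\gamma}$ the exponent $(j-1)-j\gamma$ is negative, so $\sum_{j=1}^{k}\E(Z_n(j))\to 0$ and $\P(L_n\le k)\to 0$ by Markov's inequality. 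Your alternative, a downward coupling with $G(n,\lambda f,1-\frac{1}{k})$ followed by $\lambda\downarrow 0$, is valid, and you correctly isolated the delicate point (the entrywise domination holds only for $n\ge n_0(\lambda)$, which forces $n\to\infty$ before $\lambda\to 0$). What it buys is symmetry between the two off-critical regimes; what it costs is that it invokes the full Chen--Stein content of Theorem~\ref{thm:main} where a one-line expectation computation suffices, and it discards the quantitative (polynomial) decay of $\P(L_n\le k)$ that Markov's inequality gives for free.

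One discrepancy you should be aware of, though it is not an error on your side: your argument yields $\lim_n\P(L_n\le k)=1-e^{-c_k(f)}$ in the critical case, whereas the corollary's display says $e^{-c_k(f)}$. Your value is the correct one: at $\gamma=1-\frac{1}{k}$ the number of length-$k$ paths is asymptotically $\mathcal{P}(c_k(f))$ and shorter paths vanish, so $\P(L_n\le k)\to \P\left(\mathcal{P}(c_k(f))\ge 1\right)=1-e^{-c_k(f)}$. This agrees with Theorem~\ref{thm:main}, which asserts $\lim_n\P(L_n=k)=1-e^{-c_k(f)}$, and with the paper's own proof of the dense case, whose bound $\liminf_n\P(L_n\le k)\ge 1-e^{-A^k c_k(f)}$ (with $A\to\infty$) is only meaningful under that reading. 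The middle line of the corollary as stated is a misprint for $1-e^{-c_k(f)}$.
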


Observe that for $\gamma = 1$, the Barak-Erd\H{o}s graph becomes unconnected, so that $L_n = \infty$ with positive probability. In the present article, we do not treat the case $n p_n \to \infty$ with $n^{1 - \epsilon}p_n \to 0$ for all $\epsilon > 0$. However, a phase transition should be observed for the asymptotic behaviour of $L_n$ when $p_n \approx \frac{\log n}{n}$, as the graph becomes disconnected.

\subsection{Some examples and applications}

A class of inhomogeneous Barak-Erd\H{o}s graphs previously studied strongly inhomogeneous graph. In this class of graphs, the probability of presence of the edge $(i,j)$ is given by $\theta \frac{(j-i)^\alpha}{n^\beta}$, with $\theta >0$,  $\beta > 0$ and $\alpha \in (-1,\beta)$. This model can be constructed as an inhomogeneous Barak-Erd\H{o}s graph, setting $f(x,y) = \theta (y-x)^\alpha$ and $\gamma = \beta - \alpha$. Applying Corollary~\ref{cor:coupling}, we observe that for any $k \geq 2$, if $1-\frac{1}{k-1} < \beta - \alpha < 1 - \frac{1}{k}$, we have $L_n \to k$ in probability. Additionally, if $\beta - \alpha = 1 - \frac{1}{k}$, we set
\[
  c = \theta^k \int_{S_k} \prod_{j=1}^k t_j^\alpha \dd t_1 \cdots \dd t_{k-1} = \frac{\theta^k \Gamma(1+\alpha)^k}{\Gamma(k(1+\alpha))},
\]
where $S_k = \{(t_1,\ldots t_k) : t_1+\cdots +t_k = 1\}$. We conclude by Theorem~\ref{thm:main} that $L_n$ converges in distribution to $e^{-c}\delta_k + (1 - e^{-c})\delta_{k+1}$ as $n \to \infty$.

Remark that using the coupling given in Proposition~\ref{prop:coupling}, for a similar model with $\alpha \leq -1$, we can obtain
\[
  \lim_{n \to \infty} L_n = k \text{ in probability if } \beta - \alpha = \left[1 - \frac{1}{k-1}, 1 - \frac{1}{k}\right).
\]

This result is an extension of Tesemnikov's \cite{Tes18} estimates on the length of the shortest path in the inhomogeneous Barak-Erd\H{o}s graph, setting $\beta = 0$. Outside of the boundary cases, the convergence in probability of $L_n$ to $k \in \N$ can be obtained through first- and second-moment methods, using see Lemma~\ref{lem:mean}. We handle the boundary cases by using the Chen-Stein method, showing in Lemma \ref{S-C method} that the law of the number of paths of length $k$ is close to a Poisson distribution for $n$ large enough.

An other example of interest in the case of Barak-Erd\H{o}s graphs with exponential density of connexion. Setting $\lambda,\mu \in \R$ and $\gamma > 0$, we consider a Barak-Erd\H{o}s graph with $p_{i,j}^{(n)} = \frac{e^{\lambda(j-i)/n + \mu}}{n^\gamma}$. In this case, we have
\[
  c_k(f) = \int_{0 < u_1 < \ldots<u_{k-1}<1} \prod_{j=0}^{k-1} e^{\lambda(u_{j+1}-u_j) + \mu} \dd u_1 \cdots \dd u_{k-1} = \frac{e^{\lambda + k \mu}}{k!},
\]
and Theorem~\ref{thm:main} and Corollary~\ref{cor:coupling} apply.

\section{Proof of the main result}

For each $k \leq n$, we denote by $\Gamma_k(n) = \{ \rho \in \N^{n+1} : \rho_0 = 1 < \rho_1 < \cdots < \rho_{k-1} < \rho_k = n \}$ the set of increasing paths from $1$ to $n$. As a first step towards estimating the length of the shortest in a time-inhomogeneous Barak-Erd\H{o}s graph, we compute the mean number of paths of length $k$.

\begin{lemma}
\label{lem:mean}
Let $\gamma > 0$ and $f$ a Riemann-integrable non-negative function. For $n \in \N$, we write $Z_n(k)$ for the number of paths of length $k$ between $1$ and $n$ in $G(n,f,\gamma)$, we have
\[
  \E(Z_n(k)) \sim n^{(k-1) - k \gamma} c_k(f).
\]
\end{lemma}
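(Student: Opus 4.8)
The plan is to compute $\E(Z_n(k))$ exactly and then to recognise the resulting expression as a Riemann sum converging to $c_k(f)$. A path of length $k$ from $1$ to $n$ is encoded by an element $\rho \in \Gamma_k(n)$, that is, by its $k-1$ internal vertices $1 < \rho_1 < \cdots < \rho_{k-1} < n$; writing $\rho_0 = 1$ and $\rho_k = n$, it is present in $G(n,f,\gamma)$ exactly when its $k$ edges $(\rho_j,\rho_{j+1})$ are all present, an event of probability $\prod_{j=0}^{k-1} p^{(n)}_{\rho_j,\rho_{j+1}}$ by independence of the edges. Summing over $\rho$ and using that each edge carries one factor $n^{-\gamma}$, linearity of expectation gives
\[
  \E(Z_n(k)) = \sum_{\rho \in \Gamma_k(n)} \prod_{j=0}^{k-1} \frac{f(\rho_j/n,\rho_{j+1}/n)}{n^{\gamma}} = n^{-k\gamma} \sum_{1 < \rho_1 < \cdots < \rho_{k-1} < n} \prod_{j=0}^{k-1} f(\rho_j/n,\rho_{j+1}/n).
\]

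Next I would read the remaining sum as a Riemann sum on the ordered simplex $\Delta = \{0 < u_1 < \cdots < u_{k-1} < 1\}$. Putting $u_j = \rho_j/n$, the points $(\rho_1/n,\ldots,\rho_{k-1}/n)$ sweep out $\Delta$ along a lattice of mesh $1/n$, so that, with the convention $u_0 = 0$ and $u_k = 1$,
\[
  \frac{1}{n^{k-1}} \sum_{1 < \rho_1 < \cdots < \rho_{k-1} < n} \prod_{j=0}^{k-1} f(\rho_j/n,\rho_{j+1}/n) \convn \int_{\Delta} \prod_{j=0}^{k-1} f(u_j,u_{j+1}) \, \dd u_1 \cdots \dd u_{k-1} = c_k(f).
\]
Multiplying back by $n^{-k\gamma}\cdot n^{k-1}$ then yields $\E(Z_n(k)) \sim n^{(k-1)-k\gamma}c_k(f)$, which is the claim.

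The only real content is the justification of this convergence when $f$ is merely Riemann-integrable. Being Riemann-integrable, $f$ is bounded, say by $F$, and by Lebesgue's criterion its discontinuity set $D_f \subset [0,1]^2$ is Lebesgue-null. Each factor $(u_1,\ldots,u_{k-1}) \mapsto f(u_j,u_{j+1})$ on $[0,1]^{k-1}$ depends on two coordinates only and is discontinuous at most on a cylinder over $D_f$, which is $(k-1)$-dimensional null by Fubini; hence the product of the $k$ factors is bounded and continuous off a null set, so Riemann-integrable on $[0,1]^{k-1}$ and a fortiori on $\Delta$ (whose boundary is negligible), and the standard convergence of Riemann sums to the integral applies. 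The delicate point, where I expect the main effort to go, is the boundary of the simplex: the final edge contributes the exact value $f(\rho_{k-1}/n,1)$ and causes no trouble, but the first edge contributes $f(1/n,\rho_1/n)$ in place of the limiting $f(0,\rho_1/n)$. This substitution is harmless when $f$ is continuous up to the boundary (as in every example of Section~1) and must otherwise be controlled through the behaviour of $f$ near $\{0\}\times[0,1]$; the remainder is routine bookkeeping of Riemann sums.
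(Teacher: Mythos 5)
Your proof is correct and follows essentially the same route as the paper's: linearity of expectation gives the exact formula for $\E(Z_n(k))$, and the normalized sum is then recognized as a Riemann sum over the simplex converging to $c_k(f)$ --- the paper compresses your entire third paragraph into the phrase ``by Riemann integration.'' The boundary subtlety you flag is genuine and is not addressed by the paper either: for instance, if $f$ equals $1$ everywhere except on the line $\{0\}\times[0,1]$ where it vanishes, then $f$ is Riemann-integrable and non-negative, yet $c_k(f)=0$ while $\E(Z_n(k)) \sim n^{(k-1)-k\gamma}/(k-1)!$, so the asymptotic as stated implicitly requires the boundary values $f(0,\cdot)$ and $f(\cdot,1)$ to be compatible with the limits of $f$ from the interior; your treatment, which isolates exactly this point, is if anything more careful than the paper's.
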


\begin{proof}
By linearity of the expectation, we have
\begin{align*}
  \E(Z_n(k))
  &= \sum_{\rho \in \Gamma_k(n)} \prod_{j = 0}^{k-1} p^{(n)}_{\rho_{j},\rho_{j+1}}\\
&= n^{(k-1) - k \gamma} \frac{1}{n^{k-1}}  \sum_{\rho \in \Gamma_k(n)}\prod_{j = 0}^{k-1} f\left( \tfrac{\rho_j}{n}, \tfrac{\rho_{j+1}}{n} \right).
\end{align*}
Then $\lim_{n \to \infty} \frac{1}{n^{k-1}} \sum_{\rho \in \Gamma_k(n)}\prod_{j = 0}^{k-1} f\left( \tfrac{\rho_{j}}{n}, \tfrac{\rho_{j+1}}{n} \right) = c_k(f)$, by Riemann integration, which completes the proof.
\end{proof}

In particular, we remark that under the assumptions of Theorem~\ref{thm:main}, the mean number of paths of length $k$ in $G(n,f,\gamma)$ converges to $c_k(f)$. Using this observation, we now prove that the number of paths of length $k$ converges to a Poisson variable.
\begin{lemma} \label{S-C method}
With the notation and assumptions of Theorem~\ref{thm:main}, we have
\[
  \lim_{n \to \infty} Z_n(k) = \mathcal{P}\left( c_k(f) \right) \quad \text{ in distribution.}
\]
\end{lemma}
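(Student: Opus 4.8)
My plan is to prove this via the Chen--Stein (Stein--Chen) method, exactly as the surrounding text anticipates. Write $Z_n(k) = \sum_{\rho \in \Gamma_k(n)} X_\rho$, where $X_\rho$ is the indicator that all $k$ edges of the path $\rho$ are present, so that $\E(X_\rho) = p_\rho := \prod_{j=0}^{k-1} p^{(n)}_{\rho_j,\rho_{j+1}}$ and these edge-events are independent across distinct edges. The natural dependency neighbourhood is $B_\rho = \{\rho' \in \Gamma_k(n) : \rho' \text{ and } \rho \text{ share at least one edge}\}$, which contains $\rho$ itself. With this choice, $X_\rho$ is independent of the family $(X_{\rho'})_{\rho' \notin B_\rho}$, since those paths use edges disjoint from $\rho$; hence the third Arratia--Goldstein--Gordon quantity vanishes, $b_3 = 0$. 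It then suffices, by the total variation bound $d_{\mathrm{TV}}(\mathcal{L}(Z_n(k)), \mathcal{P}(\lambda_n)) \leq 2(b_1 + b_2)$ with $\lambda_n = \E(Z_n(k))$, to show $b_1 \to 0$ and $b_2 \to 0$, where $b_1 = \sum_\rho \sum_{\rho' \in B_\rho} p_\rho p_{\rho'}$ and $b_2 = \sum_\rho \sum_{\rho' \in B_\rho, \rho' \neq \rho} \E(X_\rho X_{\rho'})$. Since $\lambda_n \to c_k(f)$ by Lemma~\ref{lem:mean} and $c_k(f) < \infty$ (a Riemann-integrable function on $[0,1]^2$ is bounded), the convergence $\mathcal{P}(\lambda_n) \Rightarrow \mathcal{P}(c_k(f))$ would then close the argument.

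The core is a uniform combinatorial estimate. I would organize both double sums by the ``overlap shape'' of a pair $(\rho,\rho')$, recording which vertices and edges coincide. For a fixed shape let $V$ be the number of distinct vertices and $E$ the number of distinct edges in the union $\rho \cup \rho'$; the number of pairs realizing that shape is of order $n^{V-2}$ (the endpoints $1$ and $n$ are pinned and the boundedness of $f$ absorbs the constants). In $b_2$ the weight of such a pair is $\prod_{e \in \rho \cup \rho'} p_e \asymp n^{-\gamma E}$, contributing order $n^{V-2-\gamma E}$, whereas in $b_1$ the shared edges are double-counted so the weight is $p_\rho p_{\rho'} \asymp n^{-2\gamma k}$, contributing order $n^{V-2-2\gamma k}$.

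The decisive input is that two \emph{distinct} directed paths from $1$ to $n$ produce a connected union whose underlying undirected graph contains a cycle, so its cyclomatic number is at least one and therefore $E \geq V$. Writing $s \geq 1$ for the number of shared edges, we have $E = 2k - s$, and with $\gamma = 1 - \tfrac1k$ this yields for the $b_2$ exponent $V - 2 - \gamma E \leq (1-\gamma)E - 2 = \tfrac{2k-s}{k} - 2 = -\tfrac{s}{k} \leq -\tfrac1k < 0$, while for the off-diagonal $b_1$ terms $V - 2 - 2\gamma k = V - 2k \leq (2k-s) - 2k = -s < 0$; the diagonal term $\rho = \rho'$ of $b_1$ has exponent $(k+1) - 2k = -(k-1) < 0$ for $k \geq 2$. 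Summing the finitely many (in $n$) shapes gives $b_1, b_2 = O(n^{-1/k}) \to 0$.

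I expect the main obstacle to be the careful bookkeeping behind the heuristic ``order $n^{V-2}$ pairs of a given shape'' and the verification that $E \geq V$ holds uniformly over all overlap patterns of distinct paths, including degenerate cases where shared edges abut the endpoints $1$ or $n$ or form several disconnected runs. Turning the asymptotic counts into rigorous inequalities valid for every $n$ — rather than only leading-order asymptotics — is where the work lies; the boundedness of $f$ keeps all multiplicative constants under control, and the strict negativity of every exponent, established via the cyclomatic inequality, is what forces $b_1 + b_2 \to 0$ and hence the claimed Poisson limit.
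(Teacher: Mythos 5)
Your proposal is correct, and it implements the Chen--Stein method in a genuinely different way from the paper. The paper never invokes the Arratia--Goldstein--Gordon total-variation bound: it verifies the Stein recurrence $\lim_{n\to\infty}\bigl(j\P(Z_n(k)=j)-\E(Z_n(k))\P(Z_n(k)=j-1)\bigr)=0$ directly, via the size-bias identity $j\P(Z_n(k)=j)=\sum_{\rho}\P(Z_n(k)=j,\ \rho\text{ open})$ together with a conditional coupling (given that $\rho$ is open, the graph has the law of $G(n,f,\gamma)$ with the edges of $\rho$ added), which reduces everything to a first-moment bound on \emph{excursions}: paths of length $2\le\ell<k$ between two vertices of $\rho$ avoiding $\rho$, whose expected number is $O(n^{\ell-1-\gamma\ell})=O(n^{\ell/k-1})\to 0$; it then concludes by tightness and identification of subsequential limits through the Poisson recurrence. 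You instead cite the AGG theorem as a black box (with $b_3=0$ by independence of edge sets) and bound $b_1$ and $b_2$ by enumerating overlap shapes, where the cyclomatic inequality $E\ge V$ for the union of two distinct increasing paths from $1$ to $n$ plays the structural role that the excursion decomposition plays in the paper; indeed your $b_2$ estimate is essentially a global, pairwise version of the paper's conditional excursion estimate. Your route buys a quantitative total-variation rate $O(n^{-1/k})$ and dispenses with the tightness/subsequence step; the paper's route buys self-containedness (only conditioning and first moments, no quantitative Poisson-approximation theorem) at the price of a purely qualitative conclusion. One shared caveat, not a defect of your argument relative to the paper's: your diagonal term in $b_1$ has exponent $1-k$, so the proof (like the statement itself, since $Z_n(1)$ is Bernoulli) requires $k\ge 2$, an assumption the paper leaves implicit as well.
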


\begin{proof}
We use the Chen-Stein method \cite{Che75,Ste72} to prove the convergence in distribution of $Z_n(k)$. More precisely, we show that for all $j \in \N$ we have
\begin{equation}
  \label{eqn:aim}
  \lim_{n \to \infty} j \P(Z_n(k)  = j) - \E(Z_n(k)) \P(Z_n(k) = j-1) =0.
\end{equation}
Together with a tightness argument (due to the fact that $\E(Z_n(k))$ converges), it proves that $Z_n(k)$ converges in distribution to a Poisson variable with parameter $\lim_{n \to \infty} \E(Z_n(k)) =c_k(f)$.

Let $j \in \N$, we rewrite
\begin{equation}
  \label{eqn:formula}
  j \P(Z_n(k)=j)
  = \E\left( \sum_{\rho \in \Gamma_k(n)} \ind{\rho \text{ open}} \ind{Z_n(k)=j}  \right),
\end{equation}
where $\rho$ is said to be open if all edges $(\rho_{i},\rho_{i+1})$ are present in the graph. Moreover for all $\rho \in \Gamma_k(n)$, we have
\begin{multline*}
  \left| \P(Z_n(k)=j|\rho \text{ open}) - \P(Z_n(k) = j-1) \right|\\
   \leq \P\left( \text{exists a path of length $k$ between $1$ and $n$ sharing an edge with $\rho$} \right).
\end{multline*}
Indeed, to construct a graph with same law as $G(n,f,\gamma)$ conditionally on $\rho$ being open, it is enough to add to the graph $G(n,f,\gamma)$ the edges $(\rho_j,\ldots \rho_{j+1})$ for all $1 \leq j \leq n$ if these are not already present. If opening these edges creates new paths, then these path would have to share at least one edge with $\rho$.

We remark that if there exists a path of length $k$ between $1$ and $n$, there exists $0 \leq i < j \leq k$ and $2 \leq \ell < k$ such that there exists a path of length $\ell$ between $\rho_i$ and $\rho_j$ that does not intersect $\rho$. Writing $Y_{i,j,\ell}$ the number of such paths, with the same method as in Lemma~\ref{lem:mean}, we compute
\begin{align*}
  \E\left( Y_{i,j,\ell}\right) &= \sum_{\rho_i < \bar{\rho}_1 < \cdots < \bar{\rho}_{\ell-1} < \rho_j} \prod_{q= 0}^{\ell-1} p^{(n)}_{\bar{\rho}_{q},\bar{\rho}_{q+1}}\\
  &\leq  n^{-\gamma \ell} \sum_{\bar{\rho} \in \Gamma_k(\ell)}\prod_{j = 0}^{\ell-1} f\left( \tfrac{\bar{\rho}_q}{n}, \tfrac{\bar{\rho}_{q+1}}{n} \right) \to 0 \text{ as $n \to \infty$.}
\end{align*}
Therefore, by union bound, we deduce that
\[
  \lim_{n \to \infty} \P(Z_n(k) = j | \rho \text{ open}) - \P(Z_n(k)= j-1) = 0,
\]
which then yields by \eqref{eqn:formula}
\[
  \sum_{\rho \in \Gamma_k(n)} \P(Z_n(k) = j \text{ and }\rho\text{ open}) - \P(Z_n(k)=j-1) \E(Z_n(k)) = o(\E(Z_n(k))).
\]
As $\E(Z_n(k))$ is bounded, we obtain \eqref{eqn:aim}.

We remark that $\sup_{n \in \N}\E(Z_n(k)) < \infty$, hence $(Z_n(k))$ is tight. Consider any subsequence $(n_j)$ so that $Z_{n_j}(k)$ converges in distribution as $j \to \infty$. Writing $Y$ a random variable with this distribution, we have for all $j \in \N$:
\[
  j \P(Y=j) = c_k(f) \P(Y=j-1),
\]
using that $\E(Z_{n_j}(k)) \to c_k(f)$. Hence $\P(Y=j) = \frac{c_k(f)^j}{j!} \P(Y=0)$, with $\P(Y>n) \to 0$ as $n \to \infty$. We conclude that $Y$ is a $\mathcal{P}(c_k(f))$ random variable.

As any converging subsequence of $(Z_n(k))$ is converging to $\mathcal{P}(c_k(f))$ in law, we conclude that $Z_n(k)$ converges to $\mathcal{P}(c_k(f))$ in law as $n \to \infty$.
\end{proof}

Before turning to the corollary, we introduce the following coupling estimate, which loosely states that a more connected graph will have a shorter shortest path between $1$ and $n$.
\begin{proposition}
\label{prop:coupling}
Let $G_n$, $\bar{G}_n$ be two inhomogeneous Barak-Erd\H{o}s graphs such that an edge between $i$ and $j$ is present with probability $p^{(n)}_{i,j}$ and $\bar{p}^{(n)}_{i,j}$ respectively. If $p^{(n)}_{i,j} \leq \bar{p}^{(n)}_{i,j}$ for any $ i $ and $ j $, then there exists a coupling between $G_n$ and $\bar{G}_n$ such that $L_n \geq \bar{L}_n$.
\end{proposition}

\begin{proof}
We assume $\bar{G}_{n}$ to be constructed on some probability space. Take any existing edge $(i, j)$ of $\bar{G}_{n}$ and do the following procedure: chosen edge is stayed in graph with probability $p_{i,j}^{(n)}/\bar{p}_{i,j}^{(n)}$ and deleted with remained probability. This procedure creates a random graph distributed exactly as $G_{n}$ and is a subgraph of $\bar{G}_n$. Therefore, as no new edge was added, the length of the shortest path cannot have decrease.
\end{proof}

\begin{proof}[Proof of Corollary~\ref{cor:coupling}]
We assume first that $k < \frac{1}{1-\gamma}$. Then, by Lemma~\ref{lem:mean}, we have
\[
  \lim_{n \to \infty} \sum_{j=1}^k \E(Z_n(j)) =0,
\]
therefore  $\P(L_n \leq k) \to 0$ by Markov inequality.

The case $k = \frac{1}{1-\gamma}$ is covered by Theorem~\ref{thm:main}.

Finally, if $k > \frac{1}{1-\gamma}$, then for all $A > 0$, the Barak-Erd\H{o}s graph $G(n,f,\gamma)$ can be coupled with $G(n,Af,\frac{k-1}{k})$ for $n$ large enough, using Proposition \ref{prop:coupling}. Therefore
\[
  \liminf_{n \to \infty}\P(L_n \leq k) \geq 1 - e^{-A^k c_k(f)},
\]
using Theorem~\ref{thm:main} and that $c_k(Af) = A^k c_k(f)$. As $f$ is positive, $c_k(f)$ is positive, and letting $A \to \infty$ we conclude that $\P(L_n \leq k) \to 1$.
\end{proof}

\paragraph{Acknowledgements.}
The research of both authors was supported by joint grant (19-51-15001) of the Russian Foundation for Basic Research and from the French CNRS' PRC grant.

\bibliographystyle{plain}
\bibliography{biblio.bib}

\begin{thebibliography}{10}

\bibitem{Che75}
L.~H.~Y. Chen.
\newblock {Poisson Approximation for Dependent Trials}.
\newblock {\em The Annals of Probability}, 3(3):534 -- 545, 1975.

\bibitem{ChR17}
K.~{Chernysh} and S.~{Ramassamy}.
\newblock {Coupling any number of balls in the infinite-bin model}.
\newblock {\em {J. Appl. Probab.}}, 54(2):540--549, 2017.

\bibitem{FoK03}
S.~{Foss} and T.~{Konstantopoulos}.
\newblock {Extended renovation theory and limit theorems for stochastic ordered
  graphs}.
\newblock {\em {Markov Process. Relat. Fields}}, 9(3):413--468, 2003.

\bibitem{FoK18}
S.~{Foss} and T.~{Konstantopoulos}.
\newblock {Limiting properties of random graph models with vertex and edge
  weights}.
\newblock {\em {J. Stat. Phys.}}, 173(3-4):626--643, 2018.

\bibitem{FKMR}
S.~Foss, T.~Konstantopoulos, B.~Mallein, and S.~Ramassamy.
\newblock Estimation of the last passage percolation constant in a charged
  complete directed acyclic graph via perfect simulation.
\newblock arXiv:2110.01559, 2021.

\bibitem{GNPT86}
E.~Gelenbe, R.~Nelson, T.~Philips, and A.~Tantawi.
\newblock {An Approximation of the Processing Time for a Random Graph Model of
  Parallel Computation}.
\newblock In {\em Proceedings of 1986 ACM Fall Joint Computer Conference}, ACM
  '86, page 691–697, Washington, DC, USA, 1986. IEEE Computer Society Press.

\bibitem{MaR19}
B.~{Mallein} and S.~{Ramassamy}.
\newblock {Two-sided infinite-bin models and analyticity for Barak-Erd\H{o}s
  graphs}.
\newblock {\em {Bernoulli}}, 25(4B):3479--3495, 2019.

\bibitem{MaR21}
B.~Mallein and S.~Ramassamy.
\newblock {Barak–Erdős graphs and the infinite-bin model}.
\newblock {\em Annales de l'Institut Henri Poincaré, Probabilités et
  Statistiques}, 57(4):1940 -- 1967, 2021.

\bibitem{New92}
C.~M. {Newman}.
\newblock {Chain lengths in certain random directed graphs}.
\newblock {\em {Random Struct. Algorithms}}, 3(3):243--253, 1992.

\bibitem{NeC86}
C.~M. Newman and J.~E. Cohen.
\newblock {A Stochastic Theory of Community Food Webs IV: Theory of Food Chain
  Lengths in Large Webs}.
\newblock {\em Proceedings of the Royal Society of London. Series B, Biological
  Sciences}, 228(1252):355--377, 1986.

\bibitem{Ste72}
C.~{Stein}.
\newblock {A bound for the error in the normal approximation to the
  distribution of a sum of dependent random variables}.
\newblock {Proc. 6th Berkeley Sympos. math. Statist. Probab., Univ. Calif.
  1970, 2, 583-602 (1972).}, 1972.

\bibitem{Tes18}
P.~{Tesemnikov}.
\newblock {On the asymptotics for the minimal distance between extreme vertices
  in a generalised Barak-Erd\H{o}s graph}.
\newblock {\em {Sib. \`Elektron. Mat. Izv.}}, 15:1556--1565, 2018.

\end{thebibliography}
\end{document}